\newcommand{\nc}{\newcommand}
\nc{\dmo}{\DeclareMathOperator}
\nc{\nt}{\newtheorem}
\theoremstyle{definition}
\nc{\Z}{\mathbb{Z}}
\nc{\R}{\mathbb{R}}
\nc{\Q}{\mathbb{Q}}
\nc{\margin}[1]{\marginpar{\tiny #1}}
\nc{\p}[1]{\smallskip\noindent{{\bf #1}}}
\title{Relative profinite rigidity of Baumslag-Solitar groups}
\author{Daxun Wang}
\address{D. Wang: Yau Mathematical Sciences Center, Tsinghua University, Beijing, China}
\email{wangdaxun@mail.tsinghua.edu.cn}
\keywords{Profinite rigidity, Buamslag-Solitar groups.}
\subjclass[2000]{20E18, 20E26.}
\begin{document}

\begin{abstract}
We prove that each residually finite Baumslag-Solitar group can be distinguished by its finite quotients from all other residually finite Baumslag-Solitar groups. 
\end{abstract}

\maketitle

\section{Introduction}
A pair of finitely generated groups $G$ and $H$ are said to be \textit{profinitely isomorphic} if they share the same isomorphism types of finite quotient groups. It is known that being profinitely isomorphic is equivalent to having isomorphic profinite completions. A finitely generated residually finite group $G$ is said to be (\textit{absolutely}) \textit{profinite rigid} if any finitely generated residually finite group $H$ that is profinitely isomorphic to $G$ is isomorphic to $G$. If we restrict our attention to a class $\mathscr{C}$  of finitely generated residually finite groups, the question of absolute profinite rigidity reduces to a question of \textit{relative profinite rigidity with respect to $\mathscr{C}$}, that is whether any group $H$ in $\mathscr{C}$ that is profinitely isomorphic to $G$ in $\mathscr{C}$ is in fact isomorphic to $G$. We note that residual finiteness is a necessary condition to impose because the free product $G\ast S$ for any finitely generated infinite simple group $S$ is profinitely isomorphic to $G$ but not isomorphic to $G$.

In this note, we restrict our attention to Baumslag-Solitar groups which have presentation as follows:
$$\mbox{BS}(m,n)=\langle a,t\ |\ ta^mt^{-1}=a^n\rangle$$
where $m,n\in \mathbb{Z}_{\neq 0}$. These groups were defined by Baumslag and Solitar in 1962 as examples of non-Hopfian groups \cite{BS62}. Our goal is to study the relative profinite rigidity of Baumslag-Solitar groups. Our method relies on Moldavanski and Sibyakova's paper about finite quotients of one-relator groups \cite{MS95} and Wilkes's paper about profinite rigidity of Seifert fiber spaces \cite{W17}. Our result is as follows.

\begin{theorem}\label{thm: result}
\textit{Among all residually finite Baumslag-Solitar groups, each residually finite Baumslag-Solitar group is relatively profinite rigid.}
\end{theorem}

\section{Relative profinite rigidity}
It follows from the presentation that $\mbox{BS}(m,n)\cong \mbox{BS}(n,m)\cong \mbox{BS}(-m,-n)$, then we can always assume for the rest of the paper that $1\leq m\leq |n|$. It is known that $\mbox{BS}(m,n)$ is residually finite if and only if $m=1$ or $m=\pm n$ with $m>1$ for the last case \cite{M72}.

Let $G$ be a group and $\mathcal{N}$ be the set of all finite index normal subgroups of $G$. We can make $\mathcal{N}$ into direct set by declaring that, for any $M,N\in \mathcal{N}$, $M\leq N$ if $M$ contains $N$. In this case, there are natural epimorphisms $\phi_{NM}:G/N\rightarrow G/M$. We define the \textit{profinite completion} of $G$ as the inverse limit $$\widehat{G}\coloneqq\displaystyle{\lim_{\longleftarrow}}_{N\in \mathcal{N}} G/N$$
If we equip each $G/N, N\in \mathcal{N}$ with the discrete topology, then $\prod_{N\in \mathcal{N}}G/N$ is a compact space and $\widehat{G}$ may be realized as a subspace of $\prod_{N\in \mathcal{N}}G/N$. In addition, there is a natural homomorphism $\iota: G\rightarrow \widehat{G}$ given by $g\mapsto (gN)$. Note that $\iota$ is injective if and only if $G$ is residually finite.

Let $G$ be a finitely generated residually finite group, We denote by $\mathcal{C}(G)$ the set of isomorphism classes of finite quotients of $G$. The following theorem states that $\mathcal{C}(G)$ encodes the same information as $\widehat{G}$. See more details in \cite{DFPR} and Theorem 4.9 of \cite{R13}.

\begin{theorem}\label{thm: finite quotients}
Suppose $G_1$ and $G_2$ are finitely generated residually finite abstract groups, then $\widehat{G}_1\cong \widehat{G}_2$ if and only if $\mathcal{C}(G_1)=\mathcal{C}(G_2)$. 
\end{theorem}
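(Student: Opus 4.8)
The plan is to prove the two implications separately; the forward one is soft and the reverse one carries the content. For the forward implication I would first note that the finite quotients of a finitely generated residually finite group $G$ are precisely the finite \emph{continuous} quotients of $\widehat{G}$: an epimorphism $G\to Q$ onto a finite group has finite-index kernel, whose closure is open in $\widehat{G}$ and induces an isomorphism $\widehat{G}/\overline{\ker}\cong Q$, while conversely a continuous epimorphism $\widehat{G}\to Q$ restricts along the dense subgroup $G$ to an epimorphism $G\to Q$. Hence $\mathcal{C}(G)$ is an invariant of the topological group $\widehat{G}$, and since $\widehat{G}_i$ is topologically finitely generated, any abstract isomorphism $\widehat{G}_1\to\widehat{G}_2$ is automatically continuous by the Nikolov--Segal theorem; so $\widehat{G}_1\cong\widehat{G}_2$ implies $\mathcal{C}(G_1)=\mathcal{C}(G_2)$.

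For the reverse implication I would reduce to a counting problem. For a finite group $Q$ write $q(G,Q)$ for the number of finite-index normal subgroups $N\trianglelefteq G$ with $G/N\cong Q$, so that $|\mathrm{Epi}(G,Q)|=|\mathrm{Aut}(Q)|\,q(G,Q)$; grouping a homomorphism by its image gives $|\mathrm{Hom}(G,Q)|=\sum_{R\le Q}|\mathrm{Epi}(G,R)|$, and Möbius inversion over the subgroup lattice of $Q$ then shows that the families $\bigl(|\mathrm{Hom}(G,Q)|\bigr)_Q$, $\bigl(|\mathrm{Epi}(G,Q)|\bigr)_Q$ and $\bigl(q(G,Q)\bigr)_Q$ determine one another. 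Using the classical fact that a finitely generated profinite group is determined up to isomorphism by the function $Q\mapsto|\mathrm{Hom}(-,Q)|$ (see \cite{DFPR}), it therefore suffices to recover every $q(G,Q)$ from the \emph{set} $\mathcal{C}(G)$. I would do this by induction on the composition length of $Q$. If $Q$ is non-abelian simple, a Goursat-type analysis shows that a subdirect subgroup of $Q^{k}$ is a product of (twisted) diagonal subgroups, so $Q^{k}$ is a quotient of $G$ exactly when $G$ has at least $k$ distinct normal subgroups with quotient $\cong Q$; hence $q(G,Q)=\max\{k : Q^{k}\in\mathcal{C}(G)\}$. If $Q=\mathbb{Z}/p$ then $q(G,\mathbb{Z}/p)=(p^{d}-1)/(p-1)$, where $d=\dim_{\mathbb{F}_p}H_1(G;\mathbb{F}_p)=\max\{k:(\mathbb{Z}/p)^{k}\in\mathcal{C}(G)\}$ is again visible in $\mathcal{C}(G)$. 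For a general $Q$ one writes $Q$ as an extension of a group of smaller composition length by a chief factor, counts such extensions among the quotients of $G$, and uses the inductive hypothesis to pin down the smaller quotients; this is carried out in \cite{DFPR} and Theorem 4.9 of \cite{R13}.

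The step I expect to be the main obstacle is this last one. In the simple and elementary-abelian cases the multiplicity $q(G,Q)$ is laid bare by how far one can push powers of $Q$ inside $\mathcal{C}(G)$, but for a general finite $Q$ one must disentangle the normal subgroups with quotient $\cong Q$ from those whose quotient is a proper quotient of $Q$, and organise the bookkeeping so that only finitely many counts are ever involved — this is where finite generation of $G$ is what keeps everything under control.
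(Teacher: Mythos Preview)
The paper does not supply its own proof of this statement; it is quoted from the literature with the sentence ``See more details in \cite{DFPR} and Theorem 4.9 of \cite{R13}.''  There is therefore no in-paper argument to compare your proposal against.

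A couple of remarks on the sketch itself may still be useful.  The forward implication is fine, and invoking Nikolov--Segal is the right move if one insists on reading $\widehat{G}_1\cong\widehat{G}_2$ as an abstract isomorphism; many sources simply take it to mean isomorphism of topological groups, in which case that direction is immediate.  For the reverse implication your plan has a structural wrinkle: you appeal to \cite{DFPR} for the assertion that the function $Q\mapsto|\mathrm{Hom}(G,Q)|$ determines $\widehat{G}$, but that statement already contains essentially all the content of the theorem (knowing every $|\mathrm{Hom}(G,Q)|$ is visibly at least as much data as knowing $\mathcal{C}(G)$), so the citation swallows what you are trying to prove.  Moreover, the argument in \cite{DFPR} does not proceed by your proposed induction on composition length.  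One instead sets $M_n(G)=\bigcap\{N\trianglelefteq G:[G:N]\le n\}$, notes that $G/M_n(G)$ is finite because $G$ is finitely generated, and observes that $G/M_n(G)$ is intrinsically characterised inside $\mathcal{C}(G)$ as the member of largest order whose normal subgroups of index at most $n$ have trivial intersection.  Hence $\mathcal{C}(G_1)=\mathcal{C}(G_2)$ forces $G_1/M_n(G_1)\cong G_2/M_n(G_2)$ for every $n$, and passing to the inverse limit over $n$ gives $\widehat{G}_1\cong\widehat{G}_2$.  Your ``powers of $Q$'' trick recovers $q(G,Q)$ cleanly only when $Q$ is characteristically simple; for a general $Q$ (already $Q=\mathbb{Z}/p^{2}$) the inductive step you describe is too schematic to stand on its own, and in the references it is the $M_n(G)$ argument, not such an induction, that does the work.
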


We next turn toward to the proof of Theorem \ref{thm: result}.

\begin{proof}[Proof of Theorem \ref{thm: result}] Let $G\cong \mbox{BS}(m,n)$ and $H\cong \mbox{BS}(m',n')$ be two Baumslag-Solitar groups and assume that $\widehat{G}\cong \widehat{H}$.
Recall that both $\mbox{BS}(m,n)$ and $\mbox{BS}(m',n')$ must be residually finite, we can then divide it into two cases. If $m=1$ or $m'=1$, the Corollary of \cite{MS95} states that any residually finite one-relator group that has the same set of finite quotients as $\mbox{BS}(1,k)$ must be isomorphic to $\mbox{BS}(1,k)$. By Theorem \ref{thm: finite quotients}, this is equivalent to saying $\mbox{BS}(1,k)$ is relatively profinite rigid among all one-relator groups. Since all Baumslag-Solitar groups are one-relator groups, we must have $\mbox{BS}(m,n)\cong \mbox{BS}(m',n')$.

It then remains to consider the case that $m=\pm n$ and $m'=\pm n'$ for some $m,m'>1$. Recall that $\widehat{\mbox{BS}(m,n)}\cong \widehat{\mbox{BS}(m',n')}$. By Proposition 3.2 of \cite{R13} we have $\mbox{BS}(m,n)^{ab}\cong \mbox{BS}(m',n')^{ab}$. Note that the abelianizations of Baumslag-Solitar groups have been computed in Lemma 1 of \cite{BN20}. In particular, we have $\mbox{BS}(m,m)^{ab}\cong \mathbb{Z}^2$ and $\mbox{BS}(m,-m)^{ab}\cong \mathbb{Z}\times \mathbb{Z}_{2m}$. It follows that $\widehat{\mbox{BS}(m,m)}\not\cong \widehat{\mbox{BS}(m',-m')}$ for any $m,m'\in \mathbb{Z}_{\neq 0}$ and $\widehat{\mbox{BS}(m,-m)}\cong \widehat{\mbox{BS}(m',-m')}$ if and only if $m=m'$. Thus it is enough to consider the case where $G\cong \mbox{BS}(m,m)$ and $H\cong \mbox{BS}(m',m')$. 

In this case, it is known that they are fundamental groups of compact Seifert fiber spaces. There is a construction of the Seifert fiber spaces described in \cite{Agol}. In particular, we can take a solid torus, and two parallel annuli in the boundary whose cores run $m$ times around the core of the solid torus. Attaching an annulus$\times \mathbb{I}$ to these two annuli we can get a Seifert fiber space with fundamental group $\mbox{BS}(m,m)$. Now suppose that $\mbox{BS}(m,m)\not\cong \mbox{BS}(m',m')$ (i.e. $m\neq m'$). We denote by $M_m$ (resp. $M_{m'}$) the Seifert fiber space we defined above that has fundamental group isomorphic to $\mbox{BS}(m,m)$ (resp. $\mbox{BS}(m',m')$). Since $m>1$, we have $M_m$ is not a solid torus, $\mathbb{S}^1\times \mathbb{S}^1\times \mathbb{I}$ or the orientable $\mathbb{I}$-bundle over the Klein bottle. This is because the fundamental groups of these three manifolds are $\mathbb{Z}$, $\mbox{BS}(1,1)$ and $\mbox{BS}(1,-1)$. Since $\widehat{\mbox{BS}(m,m)}\cong \widehat{\mbox{BS}(m',m')}$, by Theorem 5.5 of \cite{W17}, $M_m$ and $M_{m'}$ have the same base orbifold. We note that the base orbifold of $M_m$ (resp. $M_{m'}$) is a cone with angle $2\pi/m$ (resp. $2\pi/m'$) and with a ribbon attached to it. So the fundamental group of the base orbifold of $M_m$ (resp. $M_{m'}$) is $\mathbb{Z}\ast \mathbb{Z}_m$ (resp. $\mathbb{Z}\ast \mathbb{Z}_{m'}$). Thus $M_m$ and $M_{m'}$ can not have the same base orbifold unless $m=m'$, which is a contradiction.
\end{proof}

\begin{remark}
    An earlier version of this paper proved that $\mbox{BS}(1,n)$ is relative profinite rigid among all rank 2 residually finite generalized Baumslag-Solitar (GBS) groups for $n$ even. The author thanks an anonymous referee for noting the more general fact that $\mbox{BS}(1,n)$ is relatively profinite rigid among all residually finite GBS groups. The reason is because $\mbox{BS}(1,n)$ are the only solvable residually finite GBS groups \cite{L15}, and it is well known that a finitely generated residually finite group $G$ is solvable of derived length at most $N$ if and only if $\widehat{G}$ is solvable of derived length at most $N$. Therefore, if a residually finite GBS group $H$ has the same profinite completion as $\mbox{BS}(1,n)$, then it must be solvable. This implies that $H$ must be $\mbox{BS}(1,n)$ by comparing the abelianizations. 
\end{remark}

\textbf{Acknowledgements.} The author acknowledges travel support from the Simons Foundation (965204, JM). The author thanks his advisor Johanna Mangahas, and Tamunonye Cheetham-West and Gilbert Levitt for many helpful conversations and for their support.

\end{document}